\newcommand{\pref}[1]{(\ref{#1})}
\newcommand{\fullref}[2]{\ref{#1}\pref{#1-#2}}
\newcommand{\fullcref}[2]{\cref{#1}\pref{#1-#2}}
\newcommand{\fullCref}[2]{\Cref{#1}\pref{#1-#2}}
\newcommand{\cartprod}{\mathbin{\Box}}
\newcommand{\crt}[2]{a_{m,n}(#1, #2)}
\newcommand{\dC}{\vec C}
\DeclareMathOperator{\lcm}{lcm}
\renewcommand{\pmod}[1]{\ (\mathop{\mathrm{mod}} #1)}
\newcommand{\ZZ}{\mathbb{Z}}
\numberwithin{equation}{section}
\newtheorem{cor}[equation]{Corollary}
\newtheorem{lem}[equation]{Lemma}
\newtheorem{prop}[equation]{Proposition}
\newtheorem{thm}[equation]{Theorem}
\crefname{cor}{corollary}{corollaries}
\Crefname{cor}{Corollary}{Corollaries}
\crefname{lem}{lemma}{lemmas}
\Crefname{lem}{Lemma}{Lemmas}
\crefname{prop}{proposition}{propositions}
\Crefname{prop}{Proposition}{Propositions}
\theoremstyle{definition}
\newtheorem{defn}[equation]{Definition}
\newtheorem{notn}[equation]{Notation}
\newtheorem{rems}[equation]{Remarks}
\crefname{defn}{definition}{definitions}
\Crefname{defn}{Definition}{Definitions}
\newcounter{subfigure}
\renewcommand{\thesubfigure}{\alph{subfigure}}
\renewenvironment{abstract}{%
\hfill\begin{minipage}{0.95\textwidth}
\rule{\textwidth}{1pt} 
\vskip1.5\smallskipamount\par 
\textbf{Abstract.}}
{\par\noindent\rule{\textwidth}{1pt}\end{minipage}}
\newcommand{\Zbl}[1]{\href{https://zbmath.org/?q=an:#1}{Zbl\,#1}}
\newcommand{\MR}[1]{\href{https://mathscinet.ams.org/mathscinet-getitem?mr=#1}{MR\,#1}}
\begin{document}

\title{ \textbf{Hamiltonicity after reversing the directed edges \\ at a vertex of a Cartesian product} }

\author{Dave Witte Morris\thanks{dmorris@deductivepress.ca, https://deductivepress.ca/dmorris}}
\affil{Department of Mathematics and Computer Science, 
\\ University of Lethbridge, 
\\ Lethbridge, Alberta, T1K~6R4, Canada}

\maketitle

\begin{abstract}
Let $\dC_m$ and~$\dC_n$ be directed cycles of length $m$ and~$n$, with $m,n \ge 3$, and let $P(\dC_m \cartprod \dC_n)$ be the digraph that is obtained from the Cartesian product $\dC_m \cartprod \dC_n$ by choosing a vertex~$v$, and reversing the orientation of all four directed edges that are incident with~$v$. (This operation is called ``pushing'' at the vertex~$v$.) By applying a special case of unpublished work of S.\,X.\,Wu, we find elementary number-theoretic necessary and sufficient conditions for the existence of a hamiltonian cycle in $P(\dC_m \cartprod \dC_n)$. 

\qquad 
A consequence is that if $P(\dC_m \cartprod \dC_n)$ is hamiltonian, then $\gcd(m,n) = 1$, which implies that $\dC_m \cartprod \dC_n$ is not hamiltonian. This final conclusion verifies a conjecture of J.\,B.\,Klerlein and E.\,C.\,Carr.

\smallskip 

\textbf{Keywords.} hamiltonian cycle, Cartesian product, directed cycle, pushing at a vertex, reverse edges.
\\[\smallskipamount] 
\textbf{2020 AMS Classification.} 
	05C45, 
	05C20, 
	05C76. 
\end{abstract}

\setcounter{section}{-1}

\section{Preliminaries}

\begin{notn}
For $m,n,i,j \in \ZZ$ (with $m \neq 0$ and $n \neq 0$), we define the integer $\crt{i}{j}$ by the following conditions: 
	\[ \text{$\crt{i}{j} \equiv i \pmod{m}$,
	\ 
	$\crt{i}{j} \equiv j \pmod{n}$,
	\ and \ 
	$1 \le \crt{i}{j} \le \lcm(m, n)$}
	. \]
The integer is unique, if it exists. By the Chinese Remainder Theorem, $\crt{i}{j}$ does exist whenever $\gcd(m,n)= 1$ (or, more generally, whenever $i \equiv j \pmod{\gcd(m,n)}$).
\end{notn}

\begin{notn}
We use $\dC_m$ to denote a directed cycle of length~$m$.
\end{notn}

\begin{defn}[\hskip-.0001pt
{\cite[p.~88]{KlerleinCarr}}]
If $X$ is a digraph that is vertex-transitive, then the digraph $P(X)$ is constructed from~$X$ by choosing a vertex~$v$, and reversing the orientation of each directed edge that is incident with~$v$. (This operation is called ``pushing'' at the vertex~$v$ \cite{KlerleinCarr, Klostermeyer}.) Since $X$ is vertex-transitive, the isomorphism class of the resulting digraph is independent of the choice of~$v$.
\end{defn}

\begin{defn}[\hskip-.0001pt
{\cite[pp.~35 and~421]{ProductHandbook}}]
Recall that the \emph{Cartesian product} $X \cartprod Y$ of two digraphs $X$ and~$Y$ is the digraph whose vertex set is $V(X) \times V(Y)$, with a directed edge from $(x_1,y_1)$ to $(x_2, y_2)$ if and only if either 
	\begin{itemize}
	\item $x_1 = x_2$, and there is a directed edge from $y_1$ to~$y_2$ in~$Y$,
	or
	\item $y_1 = y_2$, and there is a directed edge from $x_1$ to~$x_2$ in~$X$.
	\end{itemize}
\end{defn}

\section{Statement of the main result}

This note explains that a special case of unpublished work of S.\,X.\,Wu \cite{Wu} (or slightly later published work of S.\,J.\,Curran et al.\ \cite{CurranEtAl}) provides the following elementary number-theoretic necessary and sufficient conditions for $P(\dC_m \cartprod \dC_n)$ to be hamiltonian.

\begin{prop} \label{main}
Let $\dC_m$ and $\dC_n$ be directed cycles of length $\ge 3$. The digraph $P(\dC_m \cartprod \dC_n)$ has a hamiltonian cycle if and only if
	\begin{enumerate}
	\item \label{main-mn}
	$\gcd(m, n) = 1$,
	\item \label{main-20}
	$\min\{\, \crt{0}{-2}, \ \crt{-2}{0} \,\} < \min \bigl\{\, \crt{0}{-1}, \, \crt{-1}{0} \, \bigr\}$,
	and
	\item \label{main-40}
	 $\displaystyle \gcd \left( \frac{\crt0{-4}}{m} , \ \frac{\crt{-4}{0}}{n} \right) = 1$.
	\end{enumerate}
\end{prop}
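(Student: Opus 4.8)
The plan is to recognize the digraph as a \emph{pushed Cayley digraph} and then import a ready-made hamiltonicity criterion. Since $\dC_m \cartprod \dC_n$ is exactly the Cayley digraph $\mathrm{Cay}\bigl(\ZZ_m \times \ZZ_n;\, (1,0),\, (0,1)\bigr)$, and vertex-transitivity lets us push at the identity vertex $(0,0)$, the digraph $P(\dC_m \cartprod \dC_n)$ is the push of a $2$-generated Cayley digraph of the abelian group $\ZZ_m \times \ZZ_n$. The special case of Wu's theorem I would invoke is precisely a necessary-and-sufficient condition for such a pushed digraph to be hamiltonian, so the whole argument reduces to (a) quoting that criterion and (b) translating it into the three elementary conditions \pref{main-mn}--\pref{main-40}. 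When $\gcd(m,n)=1$, the Chinese Remainder Theorem identifies $\ZZ_m \times \ZZ_n$ with the cyclic group $\ZZ_{mn}$, carrying the two generators to $a := \crt{1}{0}$ and $b := \crt{0}{1}$; the upshot is that every quantity in \pref{main-20} and \pref{main-40} is the representative in $\{1,\dots,mn\}$ of one of the small multiples $-a,\,-b,\,-2a,\,-2b,\,-4a,\,-4b$. Keeping this dictionary in view is what makes the translation feasible.

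Before translating I would isolate \emph{why} the exponents $-2$ and $-4$ must appear, since this is the conceptual heart of the matter. Grade the vertices by $\phi(i,j) = i+j \pmod{\gcd(m,n)}$. Every arc of $\dC_m \cartprod \dC_n$ raises $\phi$ by $1$, whereas each of the four arcs reversed by the push lowers $\phi$ by $1$. A hamiltonian cycle must use one out-arc and one in-arc at the pushed vertex, and at that vertex \emph{all} incident arcs are reversed arcs, so the cycle traverses exactly two ``backward'' arcs; summing the increments of $\phi$ around the cycle gives $mn - 4 \equiv 0 \pmod{\gcd(m,n)}$, whence $\gcd(m,n) \mid 4$. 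Lifting the cycle to $\ZZ^2$ refines this: the two reversed arcs the cycle uses contribute a combined displacement of $(-2,0)$, $(-1,-1)$, or $(0,-2)$ according to the local choice at the pushed vertex, and it is exactly these, together with the single reversals of displacement $(-1,0)$ and $(0,-1)$, whose Chinese-remainder values $\crt{-2}{0},\crt{0}{-2},\crt{-1}{0},\crt{0}{-1}$ govern the criterion.

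With this in hand I would carry out the translation in three steps, matching the three conclusions. First, coprimality \pref{main-mn}: the grading computation already forces $\gcd(m,n)\in\{1,2,4\}$, and I would use the finer part of Wu's criterion (or a direct winding-number parity argument in the two remaining cases) to eliminate $\gcd(m,n)=2$ and $\gcd(m,n)=4$, leaving $\gcd(m,n)=1$ and validating the cyclic identification. Second, the ordering condition \pref{main-20}: I expect Wu's criterion to select the local configuration at the pushed vertex by a minimality rule, so that a hamiltonian cycle can ``turn around'' correctly exactly when one of the two pure reversals (combined displacement $(-2,0)$ or $(0,-2)$) occurs earlier in the cyclic order of $\ZZ_{mn}$ than either single reversal of displacement $(-1,0)$ or $(0,-1)$; this is precisely $\min\{\crt{0}{-2}, \crt{-2}{0}\} < \min\{\crt{0}{-1}, \crt{-1}{0}\}$. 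Third, condition \pref{main-40}: once the local pattern is fixed, the resulting near-rotational permutation of $\ZZ_{mn}$ is a single $mn$-cycle, rather than a disjoint union of shorter cycles, exactly when the two winding numbers $\crt{0}{-4}/m$ and $\crt{-4}{0}/n$ are relatively prime.

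I anticipate that step (b) as a whole, and within it the ordering condition \pref{main-20}, is the main obstacle. Quoting Wu's criterion is routine once the Cayley-digraph description is in place, but converting its (likely recursive or cyclic-order) formulation into the clean closed-form comparison of the six residues $\crt{-1}{0},\crt{-2}{0},\crt{-4}{0},\crt{0}{-1},\crt{0}{-2},\crt{0}{-4}$ requires a careful case analysis of which reversed out-arc and in-arc the hamiltonian cycle selects, together with a verification that the minimality selection is both necessary (no cycle exists otherwise) and sufficient (the selected pattern really does close up into a single spanning cycle). The single-cycle count in \pref{main-40} should then follow from the standard fact that the number of cycles of such a permutation of $\ZZ_{mn}$ is governed by the gcd of its displacement data, which here is $\gcd\bigl(\crt{0}{-4}/m,\, \crt{-4}{0}/n\bigr)$.
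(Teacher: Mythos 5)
There is a genuine gap, and it sits at the very first step of your plan. You propose to quote Wu's result as ``precisely a necessary-and-sufficient condition for such a pushed digraph to be hamiltonian,'' but no such criterion is available: Wu's theorem (\cref{WuThm}) characterizes hamiltonicity of $(\dC_m \cartprod \dC_n) \smallsetminus R_{a,b}$, the digraph obtained by \emph{deleting} an $a \times b$ rectangle of vertices from the Cartesian product, and it says nothing about pushing. (Nor can you route around this via Curran et al.: a pushed digraph is in general not vertex-transitive, hence not a Cayley digraph, so a criterion for $2$-generated Cayley digraphs of abelian groups does not apply to $P(\dC_m \cartprod \dC_n)$ either.) The bridge between the two settings is the paper's one genuinely new step, \cref{PushedIffDeleted}: for $m,n \ge 3$, the digraph $P(\dC_m \cartprod \dC_n)$ is hamiltonian if and only if $(\dC_m \cartprod \dC_n) \smallsetminus R_{2,2}$ is. Its proof runs exactly along the lines of the local analysis you start and then abandon: at the pushed vertex~$v$ all four incident arcs are reversed, each of the four neighbours of~$v$ has a unique usable in-edge or out-edge, and chasing these forced edges shows that your mixed configuration (combined displacement $(-1,-1)$) is impossible because it closes a $4$-cycle; consequently any hamiltonian cycle of $P(\dC_m \cartprod \dC_n)$ must contain a specific six-vertex walk through~$v$, whose four interior vertices form a $2 \times 2$ block, and excising that walk (replacing it by a single edge) yields a hamiltonian cycle of $(\dC_m \cartprod \dC_n) \smallsetminus R_{2,2}$, and conversely. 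Without this reduction, or some substitute for it, there is nothing for your step (b) to translate.

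The rest of your proposal is also a plan rather than a proof: the elimination of $\gcd(m,n) \in \{2,4\}$ is deferred to ``the finer part of Wu's criterion,'' and conditions \pref{main-20} and \pref{main-40} are matched to Wu's conditions only by expectation (``I expect Wu's criterion to select\dots,'' ``should then follow\dots''). For comparison, the paper's derivation of \cref{WuThm2} from \cref{WuThm} is short but completely explicit: the existence of $\crt{-2}{0}$ forces $\gcd(m,n) \mid 2$, and Wu's gcd condition forces $n$ to be odd (otherwise both of its terms are even), so $\gcd(m,n) = 1$; the identity $\crt{i-1}{j-1} = \crt{i}{j} - 1$ converts Wu's minimality condition into condition \pref{main-20}; and the identities $\crt{-2}{0} + \crt{0}{-2} = mn - 2$ and (hence) $\crt{-4}{0} = 2\,\crt{-2}{0}$ convert Wu's gcd condition into condition \pref{main-40}. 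Your graded-counting observation that a hamiltonian cycle of $P(\dC_m \cartprod \dC_n)$ uses exactly two reversed arcs, so that $\gcd(m,n) \mid 4$, is correct and is a nice touch, but it is essentially the only step in the middle of your argument that is actually proved.
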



If $\gcd(m,n) = 1$, then it is well known (and easy to see) that $\dC_m \cartprod \dC_n$ is not hamiltonian \cite[Thm.~28.1, p.~510]{GallianBook}. Therefore, the \lcnamecref{main} has the following consequence, which was conjectured by J.\,B.\,Klerlein and E.\,C.\,Carr \cite[p.~94]{KlerleinCarr}:

\begin{cor} \label{HamNotHam}
If $\dC_m \cartprod \dC_n$ is hamiltonian \textup(and $m,n \ge 3$\textup), then $P(\dC_m \cartprod \dC_n)$ is not hamiltonian.
\end{cor}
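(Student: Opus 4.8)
The plan is to obtain the \lcnamecref{HamNotHam} as an immediate logical consequence of \cref{main}: essentially no new graph theory is needed, since all of the combinatorial content already resides in the proposition, and what remains is a one-line syllogism about the value of $\gcd(m,n)$.

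First I would invoke the well-known criterion recalled immediately before the statement: if $\gcd(m,n) = 1$, then $\dC_m \cartprod \dC_n$ is not hamiltonian \cite[Thm.~28.1, p.~510]{GallianBook}. Reading this contrapositively, the hypothesis that $\dC_m \cartprod \dC_n$ \emph{is} hamiltonian forces $\gcd(m,n) \ge 2$, and in particular $\gcd(m,n) \neq 1$.

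Next I would appeal to \cref{main}, whose first condition requires $\gcd(m,n) = 1$ as a \emph{necessary} hypothesis for $P(\dC_m \cartprod \dC_n)$ to have a hamiltonian cycle. Since the preceding step has ruled out $\gcd(m,n) = 1$, this necessary condition fails, and therefore $P(\dC_m \cartprod \dC_n)$ has no hamiltonian cycle. That is exactly the assertion of the \lcnamecref{HamNotHam}, so the argument would be complete.

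I do not anticipate a genuine obstacle, since the difficult work is entirely absorbed into \cref{main}; the \lcnamecref{HamNotHam} is simply the observation that the two relevant gcd-conditions are mutually exclusive. The only point demanding care is the direction of each implication: I must use the ``hamiltonian $\Rightarrow \gcd(m,n) = 1$'' half of \cref{main} (equivalently, its contrapositive), together with the ``hamiltonian $\Rightarrow \gcd(m,n) \neq 1$'' consequence of the cited theorem, and not inadvertently swap either statement with its converse.
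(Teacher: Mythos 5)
Your proposal is correct and is essentially the paper's own argument: the paper likewise combines condition~(1) of \cref{main} with the well-known fact that $\gcd(m,n)=1$ forces $\dC_m \cartprod \dC_n$ to be non-hamiltonian, merely stating the syllogism in the contrapositive direction (hamiltonian $P(\dC_m \cartprod \dC_n)$ implies $\gcd(m,n)=1$ implies $\dC_m \cartprod \dC_n$ not hamiltonian). Your care about not confusing each implication with its converse is exactly the right point of attention, and no gap remains.
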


\begin{rems} \label{mainRems}
\leavevmode
	\begin{enumerate}
	\item \Cref{main} requires $m$ and~$n$ to be at least~$3$. The remaining case was settled by J.\,B.\,Klerlein and E.\,C.\,Carr \cite[Thm.~6]{KlerleinCarr}:
	 $P(\dC_2 \cartprod \dC_n)$ is hamiltonian if and only if $n \in \{2,3\}$.
	 (Since $\dC_2 \cartprod \dC_2$ and $P(\dC_2 \cartprod \dC_2)$ are hamiltonian, it is clear that \cref{HamNotHam} would be false if it allowed the case where $m = n = 2$.)
	\item \label{mainRems-3}
	J.\,B.\,Klerlein and E.\,C.\,Carr also determined whether $P(C_m \cartprod C_n)$ is hamiltonian in certain other special cases. In particular, they \cite[Thm.~7]{KlerleinCarr} proved a much more concrete form of the case $m = 3$ of \cref{main}: $P(C_3 \cartprod C_n)$ is hamiltonian if and only if $n \equiv 2 \pmod{3}$.
	\item The conditions in \cref{main} are so efficient that they can be checked by a computer in seconds, even if $m$ and~$n$ have 100,000 digits. This can be verified by using the sample code in \cref{code}.
	\end{enumerate}
\end{rems}

\begin{verbbox}
def is_PCmxCn_hamiltonian(m, n):
    r"""Return `True` if `P(C_m x C_n)` has a hamiltonian cycle
    (otherwise return `False`)."""
    m = Integer(m)
    n = Integer(n)
    if min(m, n) < 3:
        raise NotImplementedError("m and n must be at least 3")
    if gcd(m, n) != 1:
        return False
    def a(i, j):
        return crt(i, j, m, n)
    if min( a(0, -2), a(-2, 0) ) > min( a(0, -1), a(-1, 0) ):
        return False
    return gcd( a(0, -4) // m, a(-4, 0) // n ) == 1
\end{verbbox}
\begin{figure}[ht]
\centerline{\theverbbox}
\caption{A \textsf{sagemath} program that implements \cref{main}.
(This program can be run online at \url{https://cocalc.com}.)
For example, \texttt{is\_PCmxCn\_hamiltonian(3,5)} returns \texttt{True} because the digraph $P(\dC_3 \cartprod \dC_5)$ is hamiltonian (see \fullcref{mainRems}{3}).}
\label{code}
\end{figure}

\section{Proof of the main result}

We assume that the vertices of $\dC_m \cartprod \dC_n$ are identified in the natural way with the elements of the abelian group $\ZZ_m \times \ZZ_n$.

\begin{notn}[cf.\ {\cite[p.~2]{Wu}}]
\leavevmode
	\begin{enumerate}
	\item For $a,b \in \ZZ^+$, the rectangle~$R_{a,b}$ is the subset $\{0,1,\ldots, a-1\} \times \{0,1,\ldots,b-1\}$ of $V(\dC_m \cartprod \dC_n)$.
	\item We use $(\dC_m \cartprod \dC_n) \smallsetminus R_{a,b}$ to denote the digraph that is obtained from $\dC_m \cartprod \dC_n$ by deleting all of the vertices in $R_{a,b}$ (and also deleting all of the directed edges that are incident with this set).
	\end{enumerate}
\end{notn}

The following simple observation is crucial: 

\begin{lem} \label{PushedIffDeleted}
For $m,n \ge 3$, the digraph $P(\dC_m \cartprod \dC_n)$ is hamiltonian if and only if $(\dC_m \cartprod \dC_n) \smallsetminus R_{2,2}$ is hamiltonian.
\end{lem}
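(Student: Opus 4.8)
The plan is to determine, by a forced-edge analysis, the unique way a hamiltonian cycle can behave near the modified corner in each of the two digraphs, and then to match the two local pictures. By vertex-transitivity I push at the origin, and identify the vertices with $\ZZ_m \times \ZZ_n$. The starting observation is that $(\dC_m \cartprod \dC_n) \smallsetminus R_{2,2}$ is nothing but $P(\dC_m \cartprod \dC_n)$ with the four vertices of $R_{2,2}$ deleted: every reversed edge is incident with $(0,0) \in R_{2,2}$, so deleting $R_{2,2}$ removes all of them and leaves precisely the edges of $\dC_m \cartprod \dC_n$ on the remaining vertices. Thus the lemma says that pushing at $(0,0)$ affects hamiltonicity exactly as deleting the four corner vertices does.

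First I would analyse $P := P(\dC_m \cartprod \dC_n)$. Reversing the four edges at $(0,0)$ leaves $(0,1)$ and $(1,0)$ with a single in-edge (from $(m-1,1)$ and $(1,n-1)$ respectively) and leaves $(m-1,0)$ and $(0,n-1)$ with a single out-edge (to $(m-1,1)$ and $(1,n-1)$). Propagating these forced choices shows that any hamiltonian cycle of $P$ must contain the two paths $(m-1,0) \to (m-1,1) \to (0,1)$ and $(0,n-1) \to (1,n-1) \to (1,0)$, and that the only remaining freedom at the corner is to choose a perfect matching of $\{(0,1),(1,0)\}$ onto $\{(0,0),(1,1)\}$. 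The two matchings are interchanged by the coordinate-swapping isomorphism $\dC_m \cartprod \dC_n \cong \dC_n \cartprod \dC_m$, so I may fix one of them. Either way the cycle runs through the eight-vertex block $S := R_{2,2} \cup \{(m-1,0),(m-1,1),(0,n-1),(1,n-1)\}$ along one forced path, entering $S$ from outside and leaving it at $(1,1)$; the rest of the cycle is then a hamiltonian path of the corner-deleted digraph $(\dC_m \cartprod \dC_n) \smallsetminus S$ whose endpoints are confined to small, explicit sets.

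A symmetric analysis applies to $D := (\dC_m \cartprod \dC_n) \smallsetminus R_{2,2}$. Here the deletion forces $(m-1,0),(m-1,1),(0,n-1),(1,n-1)$ to have out-degree $1$ and four further vertices to have in-degree $1$, and chasing these forces the two disjoint paths
\[ (m-1,0) \to (m-1,1) \to (m-1,2) \to (0,2) \to (1,2) \quad\text{and}\quad (0,n-1) \to (1,n-1) \to (2,n-1) \to (2,0) \to (2,1) \]
into every hamiltonian cycle of $D$. Thus a hamiltonian cycle of $D$ is equivalent to a spanning system of two disjoint paths in $(\dC_m \cartprod \dC_n) \smallsetminus S$, again with explicitly constrained endpoints. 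Both sides of the lemma therefore reduce to path-cover conditions on the single auxiliary digraph $(\dC_m \cartprod \dC_n) \smallsetminus S$, and the task becomes to prove these two conditions equivalent.

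The reconciliation of the two conditions is where I expect the real difficulty to lie. The $P$-side yields a single hamiltonian path, whereas the $D$-side yields a pair of spanning paths, so there is no bounded local surgery turning one hamiltonian cycle into the other: trying to reroute the cycle around the deleted corner instead of through the reversed corner sets off a cascade of forced edge-exchanges that propagates away from the corner. The mechanism I would use to control this is the pair of forced chains $(m-2,2) \to (m-1,2) \to (0,2) \to (1,2)$ and $(2,n-2) \to (2,n-1) \to (2,0) \to (2,1)$ that the six vertices on the boundary of $S$ are compelled to lie on; cutting a configuration at one link of such a chain and regluing converts a single spanning path into a pair (and back), which should identify the two conditions once one verifies that the operation is reversible and that the constrained endpoint sets — in particular the vertex $(m-1,n-1)$, which is forced to be an endpoint because it has out-degree $0$ after $S$ is removed — correspond correctly. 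Care is also needed in the smallest cases $m = 3$ or $n = 3$, where these chains partly collapse and must be treated directly.
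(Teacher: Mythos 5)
Your forced-edge analysis of both digraphs is accurate, and so is the opening observation that deleting $R_{2,2}$ from $P(\dC_m \cartprod \dC_n)$ kills every reversed edge when the pushed vertex lies in the rectangle. But the proof has a genuine gap at exactly the point you flag yourself: you reduce hamiltonicity of $P(\dC_m \cartprod \dC_n)$ to the existence of \emph{one} spanning path of $(\dC_m \cartprod \dC_n) \smallsetminus S$ with constrained ends, and hamiltonicity of $(\dC_m \cartprod \dC_n) \smallsetminus R_{2,2}$ to the existence of \emph{two} disjoint spanning paths with constrained ends, and you never prove these two conditions equivalent. The ``cut one link of a forced chain and reglue'' mechanism is only described, not defined or verified; showing that it converts one configuration into the other, is reversible, and respects the endpoint constraints is essentially the statement of the lemma itself. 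So what you have is a reduction of the lemma to a statement that is at least as hard, plus a promissory note.

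The idea you are missing is that the obstruction you ran into --- ``there is no bounded local surgery'' --- is an artifact of your normalization, not a feature of the problem; the paper's proof \emph{is} such a surgery. You pushed at $(0,0)$ and deleted $\{0,1\} \times \{0,1\}$, and with that placement a hamiltonian cycle of $P$ does meet the rectangle in two separate two-vertex visits, so deleting the rectangle tears the cycle into two arcs that cannot be rejoined locally. But both sides of the lemma are invariant under translations of $\ZZ_m \times \ZZ_n$, so you are free to choose the position of the pushed vertex \emph{relative to} the deleted rectangle, and the right choice is a different corner: push at $v = (0,1)$, keeping $R_{2,2} = \{0,1\} \times \{0,1\}$ (which still contains $v$, so the identity $P(\dC_m \cartprod \dC_n) \smallsetminus R_{2,2} = (\dC_m \cartprod \dC_n) \smallsetminus R_{2,2}$ survives). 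Your own forcing argument then shows that if the cycle uses the reversed edge $(0,1) \to (m-1,1)$, it must contain the walk
	\[ (m-1,0) \to (0,0) \to (1,0) \to (1,1) \to (0,1) \to (m-1,1) , \]
whose four \emph{interior} vertices are exactly $R_{2,2}$ and whose endpoints are joined by the grid edge $(m-1,0) \to (m-1,1)$; in the other case, where the cycle uses $(0,1) \to (0,0)$, it must contain the analogous walk $(0,2) \to (0,1) \to (0,0) \to (1,0) \to (1,1) \to (1,2)$, again with interior $R_{2,2}$ and with endpoints joined by the grid edge $(0,2) \to (1,2)$. Contracting the walk to that single edge yields a hamiltonian cycle of $(\dC_m \cartprod \dC_n) \smallsetminus R_{2,2}$. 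Conversely, in the deleted digraph the edge $(m-1,0) \to (m-1,1)$ is forced (it is the only surviving out-edge of $(m-1,0)$), and expanding it back into the first walk --- all five of whose edges exist in $P(\dC_m \cartprod \dC_n)$ --- yields a hamiltonian cycle of $P(\dC_m \cartprod \dC_n)$. This one bounded exchange proves both directions; the auxiliary digraph $(\dC_m \cartprod \dC_n) \smallsetminus S$, the two-path condition, and the cascade never arise.
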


\begin{proof}
($\Rightarrow$) \fullCref{PushedFig}{AssumeP} shows a part of $P(\dC_m \cartprod \dC_n)$ with the pushed vertex~$v$ at its centre. (All nine vertices in the figure are distinct, because $m,n \ge 3$.) Note that the vertices $v + (1,0)$ and $v + (0,1)$ have only one in-edge, and the vertices $v - (1,0)$ and $v - (0,1)$ have only one out-edge. This implies that the hamiltonian cycle must traverse these four directed edges (which are dark in the figure). 

The out-edges of~$v$ go to $v - (1,0)$ and $v - (0,1)$.
By symmetry (i.e., by interchanging $m$ and~$n$ if necessary), we may assume without loss of generality that the hamiltonian cycle uses the (white) directed edge from~$v$ to $v - (1,0)$. Then the hamiltonian cycle cannot use the edge from $v + (0,1)$ to~$v$ (because that would create a $4$-cycle), so it must use the other in-edge of~$v$, which is the (grey) directed edge from $v + (1,0)$ to~$v$. Also, the hamiltonian cycle cannot use the (striped) directed edge from $v - (1,1)$ to $v - (1,0)$ (because it already uses a different in-edge of $v - (1,0)$), so it must use the other out-edge of $v - (1,1)$, which goes to $v - (0,1)$ (and is grey in the figure). 

Now, assuming without loss of generality that $R_{2,2}$ consists of the four white vertices in the bottom right of the picture, we can construct a hamiltonian cycle in $(\dC_m \cartprod \dC_n) \smallsetminus R_{2,2}$ by deleting the edges in the walk 
	\[ v - (1,1), \ v - (0,1), \ v + (1,-1), \ v + (1,0), \ v, \ v - (1,0) , \]
and inserting the (striped) directed edge from $v - (1,1)$ to $v - (1,0)$.

\begin{figure}[ht]
\setcounter{subfigure}{0}
\centerline{\refstepcounter{subfigure} \label{PushedFig-AssumeP}
	$\begin{matrix} \includegraphics{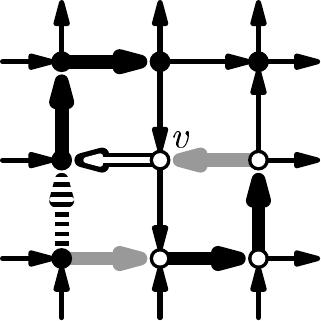} \\ \text{(\thesubfigure)} \end{matrix}$
	\hskip 0pt plus 0.5fil
	\refstepcounter{subfigure} \label{PushedFig-AssumeR}
	$\begin{matrix} \includegraphics{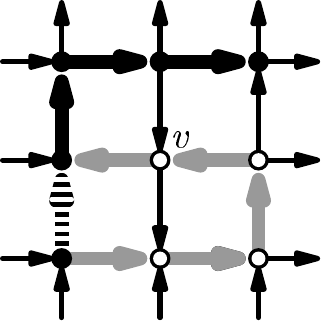} \\ \text{(\thesubfigure)} \end{matrix}$}
	\caption{Two drawings centred at the pushed vertex~$v$.}
	\label{PushedFig}
\end{figure}

\medbreak

($\Leftarrow$) \fullCref{PushedFig}{AssumeR} shows the same portion of $P(\dC_m \cartprod \dC_n)$, centred at the pushed vertex~$v$, with the (white) vertices of the rectangle $R_{2,2}$ in the bottom right corner again. Note that 
	\[ P(\dC_m \cartprod \dC_n) \smallsetminus R_{2,2} = (\dC_m \cartprod \dC_n) \smallsetminus R_{2,2}, \]
so, by assumption, there is a hamiltonian cycle in $P(\dC_m \cartprod \dC_n) \smallsetminus R_{2,2}$.  It must use all of the directed edges that are dark or striped in this picture, because $v - (1,1)$ and $v - (1,0)$ have only one out-edge that has not been deleted, and the vertices $v + (0,1)$ and $v + (1,1)$ have only one in-edge that has not been deleted. Then we can construct a hamiltonian cycle in $P(\dC_m \cartprod \dC_n)$ by reversing the process in the previous part of the proof: delete the (striped) edge from $v - (1,1)$ to $v - (1,0)$, and replace it with the walk
	\[ v - (1,1), v - (0,1), v + (1,-1), v + (1,0), v, v - (1,0) , \]
whose edges are grey in the picture.
\end{proof}

Hence, the existence of a hamiltonian cycle in $P(\dC_m \cartprod \dC_n)$ is characterized by the case $a = b = 2$ of the following result:

\begin{thm}[S.\,X.\,Wu {\cite[Cor.~11]{Wu}}] \label{WuThm}
The digraph $(\dC_m \cartprod \dC_n) \smallsetminus R_{a,b}$ is hamiltonian if and only if either the following conditions are satisfied, or they are satisfied after interchanging $m$ and~$n$, and also interchanging $a$ and~$b$: 
	\[ \text{$\crt{-a}{0}$ exists,} \]
	\[ \crt{-a}{0} = \min \left\{ \begin{matrix}
		 \crt{-a}{-b}, \crt{-a}{-b + 1}, \crt{-a}{-b + 2}, \ldots, \crt{-a}{0}, \\[\smallskipamount]
		 \crt{-a}{-b}, \crt{-a + 1}{-b}, \crt{-a + 2}{-b}, \ldots, \crt0{-b} 
		 \end{matrix} \right\} \]
\textup(where any terms in the minimum that do not exist are simply ignored\/\textup), and
	\[ \gcd \left( n - b - b \left\lfloor \frac{\crt{-a}{0}}{m} \right\rfloor, \ b \, \frac{\crt{-a}{0}}{n} \right) = 1 . \]
\end{thm}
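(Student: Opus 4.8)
The plan is to encode spanning $1$-diregular subdigraphs (spanning subdigraphs in which every surviving vertex has in-degree and out-degree exactly one), and then to isolate the connected ones, which are precisely the hamiltonian cycles. Identifying the vertices with $\ZZ_m \times \ZZ_n$, I would record for each surviving vertex whether its unique out-edge goes \emph{right} (adds $(1,0)$) or \emph{up} (adds $(0,1)$), and write $S$ for the set of ``up'' vertices. The first thing to verify is the local in-degree condition: at a vertex $(i,j)$ all of whose neighbours survive, having in-degree exactly one is equivalent to $\mathbf 1_S(i-1,j) = \mathbf 1_S(i,j-1)$, so that $S$ is invariant under adding $(1,-1)$. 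Thus, away from the rectangle, the up-set is constant on the anti-diagonal orbits of $\langle (1,-1)\rangle$, of which there are exactly $\gcd(m,n)$, each of size $\lcm(m,n)$. This is the structural skeleton that makes the Chinese-remainder quantities $\crt{\cdot}{\cdot}$ the natural coordinates.

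Next I would treat the boundary of $R_{a,b}$ as a defect that breaks this invariance in a controlled way. The deleted vertices force those immediately to the left of the rectangle to go up, those immediately below it to go right, and they restrict the in-options of the vertices on the right and top faces. Tracking how the anti-diagonal ``seam'' separating the up-region from the right-region must cross these faces, I would show that every such subdigraph is rigid: it agrees with an anti-diagonal-constant pattern except along a single seam whose position, measured along the $(1,-1)$ direction around the torus, is pinned down by the rectangle. I expect $\crt{-a}{0}$ to enter here as the CRT lift recording where the forced seam first realigns with the bottom row at the left face of the rectangle.

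With the skeleton in place, the three stated conditions should fall out as solvability, consistency, and connectivity. The existence of $\crt{-a}{0}$ is exactly the compatibility $\gcd(m,n)\mid a$ needed for the seam to close up around the torus. The minimality condition $\crt{-a}{0}=\min\{\dots\}$, taken over the L-shaped family $\crt{-a}{-b+t}$ (left arm) and $\crt{-a+s}{-b}$ (bottom arm), says that among these boundary points the CRT-minimum is attained at $(-a,0)$; this is what selects the correct one of the two symmetric configurations and guarantees the forced pattern does not overlap itself. Finally, once a consistent subdigraph exists, its cycle decomposition is governed by a shift permutation whose orbit structure is controlled by two integers, and I would compute that the whole subdigraph is a single cycle precisely when $n - b - b\lfloor \crt{-a}{0}/m\rfloor$ and $b\,\crt{-a}{0}/n$ are coprime, which is the third condition. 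The clause ``after interchanging $m$ and $n$ and $a$ and $b$'' is just the transpose symmetry $(i,j)\mapsto(j,i)$, which swaps up and right together with the two orientations available at the rectangle's corner, so one may assume without loss of generality that the forced corner edge points one way.

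The hard part will be the rigidity claim in the second paragraph: proving that hamiltonicity forces this narrow ``anti-diagonal pattern plus one seam'' family, with the seam located by $\crt{-a}{0}$, and that no more intricate subdigraph can route around the rectangle. Once rigidity is established, the three conditions are a careful but essentially routine translation of ``the seam closes up, meets the correct face first, and yields one cycle'' into Chinese-remainder and gcd arithmetic. For the application in \cref{PushedIffDeleted} only the case $a=b=2$ is needed, where condition~\pref{main-mn} forces $\lcm(m,n)=mn$ and the bookkeeping simplifies considerably.
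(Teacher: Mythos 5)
Your outline retraces the same route as the paper's treatment of this result (which is Wu's route): your ``up/right'' encoding of a spanning $1$-diregular subdigraph is the paper's notion of a vertex-disjoint cycle cover together with \cref{TravelDefn}; your observation that in-degree one forces $S$ to be invariant under adding $(1,-1)$ is exactly the arc-forcing statement of \cref{ArcForcing}; your rigidity-plus-seam claim is the content of \cref{UniqueCycles,SpanIff}; and your ``shift permutation whose orbit structure is controlled by two integers'' is the knot class of Curran and Witte invoked at~\pref{CycleIffGcd}. The problem is that what you have written is a roadmap of that proof, not the proof: the two steps that carry all of the mathematical weight are deferred with ``I would show,'' ``I expect,'' and ``I would compute,'' and those two steps \emph{are} the theorem.

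Concretely, the first missing piece is the rigidity statement itself. In the paper (for $a=b=2$) this is split into \cref{UniqueCycles} (every coset of $\langle(1,-1)\rangle$ must meet the rectangle, the cosets break into intervals between hits, each interval travels coherently, and its direction is forced --- hence the cover is unique) and \cref{SpanIff} (existence is equivalent to the minimality condition, and the number of right-travelling vertices is exactly $2\min\{\crt{-2}{0},\crt{0}{-2}\}$). None of this is derived in your proposal, and your geometric picture needs correction: the right-travelling set is not ``a single seam'' but a union of $b$ parallel anti-diagonal segments (for $a=b=2$, the two segments based at $(1,-1)$ and $(0,-1)$ in the proof of \cref{SpanIff}), and proving that their common length is pinned to $\crt{-a}{0}$, rather than some other coset-dependent quantity, is precisely where the L-shaped minimum comes from. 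The second missing piece is the single-cycle criterion: the claim that a cycle cover is connected if and only if $\gcd(x/m, y/n)=1$ is a genuine theorem, which the paper does not prove but cites as \cite[Prop.~4.12(a)]{CurranWitte} (see~\pref{CycleIffGcd}); you would need to prove it or cite it. Even granting it, passing from $\gcd(x/m,y/n)=1$ to the stated form $\gcd\bigl( n - b - b \lfloor \crt{-a}{0}/m \rfloor, \ b\,\crt{-a}{0}/n \bigr) = 1$ requires the floor-function arithmetic of the kind carried out in \cref{DerivePfSect}, which you do not attempt. (For calibration: the paper itself proves only the case $a=b=2$, namely \cref{WuThm2}, and attributes the general statement to the unpublished \cite{Wu}; your sketch is at the level of the paper's narrative, not of its lemmas.)
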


\begin{rems}
\leavevmode
	\begin{enumerate}
	\item S.\,X.\,Wu showed that if $(\dC_m \cartprod \dC_n) \smallsetminus R_{a,b}$ has a hamiltonian cycle, then it is unique (see \cref{UniqueCycles} below). 
 It follows that if $P(\dC_m \cartprod \dC_n)$ is hamiltonian (and $m,n \ge 3$), then $P(\dC_m \cartprod \dC_n)$ has exactly two hamiltonian cycles. One hamiltonian cycle will be constructed in the proof of \cref{PushedIffDeleted}, and the other is constructed by interchanging $\dC_m$ and~$\dC_n$ in this proof (or, in other words, by reflecting \fullcref{PushedFig}{AssumeR} across the line $y = x$).

	\item A very different formulation of the conditions in the statement of \cref{WuThm} was proved by S.\,J.\,Curran et al.\ \cite[Thm.~4.3]{CurranEtAl}, as a special case of a more general version \cite[Thm.~4.2]{CurranEtAl} that applies to all $2$-generated Cayley digraphs on finite abelian groups, not only Cartesian products of directed cycles.
	\end{enumerate}
\end{rems}

We need only the special case where $a = b = 2$, which can be restated as follows (see \cref{DerivePfSect} or \cref{DirectPfSect}):

\begin{cor} \label{WuThm2}
For $m,n \ge 3$, the digraph $(\dC_m \cartprod \dC_n) \smallsetminus R_{2,2}$ is hamiltonian if and only if: 
	\begin{enumerate}
	\item \label{WuThm2-gcd=1}
	$\gcd(m, n) = 1$, 
	\item\label{WuThm2-a}
	 $\min \{\, \crt{0}{-2}, \,\crt{-2}{0} \,\} < \min \bigl\{\,  \crt{0}{-1}, \, \crt{-1}{0} \, \bigr\}$,
	and
	\item \label{WuThm2-gcd}
	$\displaystyle \gcd \left( \frac{\crt0{-4}}{m} , \ \frac{\crt{-4}{0}}{n} \right) = 1$.
	\end{enumerate}
\end{cor}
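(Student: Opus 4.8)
The plan is to derive \cref{WuThm2} directly from the case $a=b=2$ of \cref{WuThm}. Since $a=b$, the alternative ``or'' in \cref{WuThm} amounts simply to interchanging $m$ and~$n$, and (because $m,n\ge3$) the five Chinese-remainder values occurring in the minimum are the same set for either ordering, namely
\[ \{\, \crt{-2}{-2},\ \crt{-2}{-1},\ \crt{-2}0,\ \crt{-1}{-2},\ \crt0{-2} \,\}, \]
the only difference being whether $\crt{-2}0$ or $\crt0{-2}$ is required to achieve the minimum. Before matching conditions, I would record three elementary identities that hold whenever $\gcd(m,n)=1$ and $m,n\ge3$, and which drive the entire argument:
\[ \crt{-2}{-2}=mn-2, \qquad \crt{-2}{-1}=\crt{-1}0-1, \qquad \crt{-1}{-2}=\crt0{-1}-1, \qquad \crt{-2}0+\crt0{-2}=mn-2 . \]
Each of the first three is proved in one line by checking the two defining congruences and the range $1\le\,\cdot\,\le mn$; the sum formula follows because that sum is congruent to $\crt{-2}{-2}=mn-2$ modulo $mn$, while the only other candidate value $2mn-2$ would force the impossible equality $\crt0{-2}=mn$.

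For \fullcref{WuThm2}{gcd=1}, I would first observe that the existence of $\crt{-2}0$ (or of $\crt0{-2}$ for the interchanged ordering) forces $\gcd(m,n)\mid 2$, and that if $\gcd(m,n)=2$ then $m$ and~$n$ are both even, so both arguments of the final gcd in \cref{WuThm} are even and that condition fails for either ordering. Hence a hamiltonian cycle forces $\gcd(m,n)=1$; conversely \fullcref{WuThm2}{gcd=1} guarantees that every $\crt ij$ exists and that $\lcm(m,n)=mn$. From this point I assume $\gcd(m,n)=1$ and, interchanging $m$ and~$n$ if necessary (which preserves all three conditions of \cref{WuThm2}, since each is symmetric under the swap $\crt{-2}0\leftrightarrow\crt0{-2}$, $\crt{-1}0\leftrightarrow\crt0{-1}$, $\crt{-4}0/n\leftrightarrow\crt0{-4}/m$), assume $\crt{-2}0<\crt0{-2}$. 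As the five values are distinct, the interchanged ordering then cannot attain the minimum, so it suffices to match the conditions of \cref{WuThm} for the ordering $(m,n)$ with \fullcref{WuThm2}{a} and~\fullcref{WuThm2}{gcd}.

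Substituting the identities, the minimum is taken over $\{\,mn-2,\ \crt{-1}0-1,\ \crt{-2}0,\ \crt0{-1}-1,\ \crt0{-2}\,\}$. Since $mn-2$ is the largest possible value and $\crt{-2}0<\crt0{-2}$ by assumption, the requirement that $\crt{-2}0$ be the minimum is exactly $\crt{-2}0<\crt{-1}0$ and $\crt{-2}0<\crt0{-1}$, which together with $\crt{-2}0<\crt0{-2}$ is \fullcref{WuThm2}{a}. For the gcd, I would write $\crt{-2}0=kn$ and $\crt{-2}0+2=qm$; then the floor in \cref{WuThm} equals $q-1$, so its first argument becomes $n-2q$ and its second becomes $2k$. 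The sum identity gives $2\crt{-2}0<mn-2$, hence $\crt{-4}0=2\crt{-2}0$ with no wraparound and $\crt{-4}0/n=2k$; it also gives $k'+q=n$ for $k'=\crt0{-2}/m$, and $q\le n/2$, so $\crt0{-4}/m=n-2q$. Therefore the gcd in \cref{WuThm} equals $\gcd\!\left(\crt0{-4}/m,\ \crt{-4}0/n\right)$, which is \fullcref{WuThm2}{gcd}.

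The substantive content of this proof is the discovery and verification of the four collapsing identities: once $\crt{-2}{-1}$ and $\crt{-1}{-2}$ are recognized as $\crt{-1}0-1$ and $\crt0{-1}-1$, and $\crt{-2}0+\crt0{-2}$ is pinned to $mn-2$, both the minimality condition and the gcd condition of \cref{WuThm} reduce mechanically to the symmetric conditions of \cref{WuThm2}. I expect the one genuinely delicate point to be the no-wraparound bookkeeping in the last step: I must confirm that, under $\crt{-2}0<\crt0{-2}$, the doubling $\crt{-2}0\mapsto\crt{-4}0$ does not wrap around $mn$ while $\crt0{-2}\mapsto\crt0{-4}$ does, and I must separately treat the single boundary case $q=n/2$ (possible only when $n$ is even), where both gcds are forced to be even so that both conditions fail and the equivalence is preserved.
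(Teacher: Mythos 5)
Your proposal is correct and follows essentially the same route as the paper's own derivation of \cref{WuThm2} from \cref{WuThm} in \cref{DerivePfSect}: the same collapsing identities $\crt{-2}{-1}=\crt{-1}{0}-1$, $\crt{-1}{-2}=\crt{0}{-1}-1$ and $\crt{-2}{0}+\crt{0}{-2}=mn-2$, the same parity argument forcing $\gcd(m,n)=1$, and the same floor/doubling computation identifying the gcd condition of \cref{WuThm} with \fullref{WuThm2}{gcd}. If anything, your write-up is slightly more complete, since you treat both directions as a single matching of conditions and explicitly dispose of the boundary case $q=n/2$ (which forces $n=4$), a point the paper subsumes under its observation that $n$ must be odd together with the remark that ``the argument is reversible.''
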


\begin{proof}[\bf Proof of \cref{main}]
Combine \cref{PushedIffDeleted,WuThm2}.
\end{proof}

\section{Proof of \texorpdfstring{\cref{WuThm2} from \cref{WuThm}}{the corollary from the theorem}} \label{DerivePfSect}

We now explain how to derive \cref{WuThm2} from \cref{WuThm}. (Alternatively, the \lcnamecref{WuThm2} could also be derived from the work of S.\,J.\,Curran et al.\ \cite[Thm.~4.3]{CurranEtAl}, or see \cref{DirectPfSect} for a direct proof that does not assume familiarity with \cite{CurranEtAl} or~\cite{Wu}.) Actually, we prove only ($\Rightarrow$) in this \lcnamecref{DerivePfSect}, but the argument is reversible.

The conclusions of the \lcnamecref{WuThm2} are symmetric under interchanging $m$ and~$n$, so we may assume that the conditions in the statement of \cref{WuThm} hold. For $a = b = 2$, this means:
	\begin{align} \label{WuThm2Pf-a}
	 \crt{-2}{0} = \min \left\{ 
	 	\begin{matrix} \crt{-2}{-2}, \ \crt{-2}{-1}, \ \crt{-2}{0}, \\[\smallskipamount]
		 \crt{-2}{-2}, \ \crt{-1}{-2}, \ \crt{0}{-2} \end{matrix} \right\}
	\end{align}
and
	\begin{align} \label{WuThm2Pf-gcd}
	\gcd \left( n - 2 - 2 \left\lfloor \frac{\crt{-2}{0}}{m} \right\rfloor, \ 2 \, \frac{\crt{-2}{0}}{n} \right) = 1
	. \end{align}

\pref{WuThm2-gcd=1}
Note that $n$ must be odd. (Otherwise, both terms in the $\gcd$ of~\pref{WuThm2Pf-gcd} are even, which contradicts the fact that the $\gcd$ is~$1$.) Also, since $\crt{-2}{0}$ exists, we know that $\gcd(m,n) \in \{1, 2\}$. From the fact that $n$ is odd, we conclude that $\gcd(m,n) = 1$.

\pref{WuThm2-a}
Since 
	\begin{align} \label{SpanIffPf-diff}
	\text{$\crt{i - 1}{j - 1} = \crt{i}{j} - 1$ 
	\quad
	(unless $i \equiv j \equiv 0 \pmod{\lcm(m,n)}$),} 
	\end{align}
we have 
	\begin{align} \label{SpanIffPf-12=01}
	\text{$\crt{-2}{-1} = \crt{-1}{0} - 1$
	\quad and \quad
	$\crt{-1}{-2} = \crt{0}{-1} - 1$} 
	. \end{align}
Therefore, we see from~\pref{WuThm2Pf-a} that \pref{WuThm2-a} holds.

(For reversing the argument, note that $\crt{-2}{-2} = mn - 2$, so it is always true that $\crt{-2}{0} < \crt{-2}{-2}$, and also note that the inequality $\crt{-2}{0} < \crt{0}{-2}$ can be achieved by interchanging $m$ and~$n$ if it does not already hold.)

\pref{WuThm2-gcd}
Note that 
	\[ \crt{-2}{0} + \crt{0}{-2} = mn - 2, \]
because the left-hand side is congruent to~$-2$ modulo both $m$ and~$n$ (and we know from~\pref{WuThm2-gcd=1} that $m$ and~$n$ are relatively prime). 
Since (by~\pref{WuThm2Pf-a}) we have $\crt{-2}{0} < \crt{0}{-2}$,
this implies that 
	\[ \crt{-2}{0} < \frac{mn}{2} - 1, \]
so $\crt{-4}{0} = 2 \, \crt{-2}{0}$. Therefore, we have
	\[ 2 \, \frac{\crt{-2}{0}}{n} = \frac{\crt{-4}{0}}{n} . \]

Hence, in order to establish that~\pref{WuThm2Pf-gcd} is the same as conclusion~\pref{WuThm2-gcd} of the \lcnamecref{WuThm2}, all that remains is to show
	\[ n - 2 - 2 \left\lfloor \frac{\crt{-2}{0}}{m} \right\rfloor = \frac{\crt0{-4}}m . \]
Since $\crt{-2}{0} + 2$ is a multiple of~$m$, we see that the left-hand side is
	\begin{align*}
	n - 2 - 2 \left( \frac{\crt{-2}{0} + 2}{m} - 1 \right)
	&= n - 2 \, \frac{\crt{-2}{0} + 2}{m} 
	\\&= \frac{mn - 2 \, \crt{-2}{0} - 4}{m} 
	. \end{align*}
Also note that
	\[ mn - 2 \, \crt{-2}{0} - 4
	> mn - 2 \left( \frac{mn}{2} - 1 \right) - 4
	= -2 . \]
It is therefore easy to see that 
	\[ mn - 2 \, \crt{-2}{0} - 4 = \crt{0}{-4} \]
(because the two sides are congruent modulo both~$m$ and~$n$), which completes the proof.

\section{Direct proof of \texorpdfstring{\cref{WuThm2}}{the corollary}} \label{DirectPfSect}

For completeness (since \cite{Wu} was never published), and because some readers may find it instructive, we sketch a direct proof of \cref{WuThm2} that is based on S.\,X.\,Wu's proof \cite[\S4]{Wu} of \cref{WuThm}. (The same ideas apply to the general case of \cref{WuThm}, but the details are more complicated.) We begin with two 
\lcnamecref{TravelDefn}s and some \lcnamecref{SpanIff}s.

\begin{defn}[\hskip-0.001pt{\cite{Wikipedia-CycleCover}}] 
A spanning subdigraph~$H$ of a digraph~$X$ is a \emph{vertex-disjoint cycle cover} if $H$ is a vertex-disjoint union of directed cycles. (Equivalently, the invalence and outvalence of every vertex of~$H$ is~$1$.)
\end{defn}

\begin{defn}[\hskip-0.001pt{cf.\ \cite[p.~82]{Housman}}] \label{TravelDefn} 
Assume $H$ is a vertex-disjoint cycle cover of $(\dC_m \cartprod \dC_n) \smallsetminus R_{2,2}$. Let $v$ be a vertex of~$H$, and let $s \in \{(1,0), (0,1)\}$. We say that \emph{$v$ travels by~$s$} if the out-edge of~$v$ is the directed edge from~$v$ to $v + s$.
\end{defn}

The arguments in this \lcnamecref{DirectPfSect} utilize basic properties of the ``arc-forcing subgroup'' $\langle (1, -1) \rangle$ \cite[\S2.3]{WitteGallianSurvey} that were discovered by R.\,A.\,Rankin \cite[Lem.~1]{Rankin} and D.\,Housman \cite[pp.~82--83]{Housman}. The specific facts that we need are recorded in the following 
\lcnamecref{ArcForcing}.

\begin{lem}[cf.~{\cite[p.~2]{Wu} or \cite[Rem.~2.2]{CurranEtAl}}] \label{ArcForcing}
Let $H$ be a vertex-disjoint cycle cover of $(\dC_m \cartprod \dC_n) \smallsetminus R_{2,2}$. For every vertex~$v$ of~$H$:
	\begin{enumerate}
	\item \label{ArcForcing-10}
	If $v$ travels by $(1,0)$, and $v + (1, -1) \notin R_{2,2}$, then $v + (1, -1)$ also travels by $(1,0)$.
	\item \label{ArcForcing-10x}
	If $v$ travels by $(1,0)$, and neither $v - (1, -1)$ nor $v + (0,1)$ is in~$R_{2,2}$, then $v - (1, -1)$ also travels by $(1,0)$.
	\item \label{ArcForcing-01}
	If $v$ travels by $(0,1)$, and $v - (1, -1) \notin R_{2,2}$, then $v - (1, -1)$ also travels by $(0,1)$.
	\item \label{ArcForcing-01x}
	If $v$ travels by $(0,1)$, and neither $v + (1, -1)$ nor $v + (1,0)$ is in~$R_{2,2}$, then $v + (1, -1)$ also travels by $(0,1)$.
	\end{enumerate}
\end{lem}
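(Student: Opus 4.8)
The plan is to exploit the single defining property of a vertex-disjoint cycle cover: every vertex of~$H$ has exactly one out-edge and exactly one in-edge. Consequently each vertex $u \notin R_{2,2}$ belongs to~$H$ and travels by exactly one of $(1,0)$ and~$(0,1)$, so to show that a given vertex travels by a prescribed direction it suffices to rule out the opposite direction. All four statements have the same shape, and the coordinate swap $(a,b) \mapsto (b,a)$ --- which interchanges $\dC_m$ and~$\dC_n$, carries $(1,0)$-travel to $(0,1)$-travel, and preserves the symmetric rectangle~$R_{2,2}$ --- converts \pref{ArcForcing-10} into \pref{ArcForcing-01} and \pref{ArcForcing-10x} into \pref{ArcForcing-01x}. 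I would therefore prove \pref{ArcForcing-10} and \pref{ArcForcing-10x} directly and then deduce the remaining two by applying the proven statements to the product $\dC_n \cartprod \dC_m$ (so the reduction is not circular).

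For \pref{ArcForcing-10}, suppose $v$ travels by $(1,0)$, so the edge from~$v$ to $w = v + (1,0)$ belongs to~$H$; in particular $w$ is a vertex of~$H$, hence $w \notin R_{2,2}$ automatically. The only two vertices that can send an edge into~$w$ are $w - (1,0) = v$ and $w - (0,1) = v + (1,-1)$. Because $w$ already receives its unique in-edge from~$v$, the potential edge from $v + (1,-1)$ to~$w$ is absent from~$H$. But that edge is exactly the out-edge that $v + (1,-1)$ would use if it travelled by~$(0,1)$, since $(v+(1,-1)) + (0,1) = w$. Hence, provided $v + (1,-1) \notin R_{2,2}$ so that it is a vertex of~$H$ and travels somewhere, it cannot travel by~$(0,1)$ and must therefore travel by~$(1,0)$, as claimed.

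The argument for \pref{ArcForcing-10x} is the mirror image, looking at the out-neighbour of~$v$ in the other coordinate direction. Since $v$ travels by~$(1,0)$ and not by~$(0,1)$, the edge from~$v$ to $v + (0,1)$ is \emph{not} in~$H$. Assuming $v + (0,1) \notin R_{2,2}$, this vertex lies in~$H$ and so receives a unique in-edge, whose only two candidate sources are $v$ and $(v+(0,1)) - (1,0) = v - (1,-1)$. Having excluded~$v$, that in-edge must come from $v - (1,-1)$, which requires $v - (1,-1) \notin R_{2,2}$ and forces $(v-(1,-1)) + (1,0) = v + (0,1)$ to be its out-edge; that is, $v - (1,-1)$ travels by~$(1,0)$.

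The only point that needs genuine care --- and the reason the four statements carry slightly different $R_{2,2}$-hypotheses --- is the bookkeeping about which vertices are guaranteed to lie in~$H$. In \pref{ArcForcing-10} the hub vertex $w = v+(1,0)$ is automatically in~$H$, being the head of an edge of~$H$, so only the single hypothesis $v+(1,-1) \notin R_{2,2}$ is needed; in \pref{ArcForcing-10x} the hub $v+(0,1)$ is \emph{not} forced into~$H$ by the assumption on~$v$, which is exactly why the second hypothesis $v+(0,1) \notin R_{2,2}$ must be imposed. I expect verifying these membership conditions, rather than the forcing logic itself, to be the main (and essentially only) subtlety.
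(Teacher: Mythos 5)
Your proposal is correct and follows essentially the same route as the paper's proof: in each case you examine the unique in-edge of the relevant "hub" vertex ($v+(1,0)$ for the first pair, $v+(0,1)$ for the second), exclude one of its two possible sources, and invoke the fact that every vertex of~$H$ has invalence and outvalence exactly~$1$, with the remaining two statements obtained by the coordinate-swap symmetry. The bookkeeping point you flag (the hub being automatically in~$H$ in one case but needing the extra $R_{2,2}$-hypothesis in the other) is exactly the distinction the paper's two-part proof reflects.
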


\begin{proof}
(\ref{ArcForcing-10}, \ref{ArcForcing-01}) By symmetry, it suffices to prove~\pref{ArcForcing-10}. For convenience, let $w = v + (1,0)$. 
Since $H$ is a vertex-disjoint cycle cover, we know that $H$ cannot have both a directed edge from $v$ to~$w$ and a directed edge from $v + (1, -1)$ to~$w$ (because the invalence of~$w$ cannot be greater than~$1$). Hence, $v + (1, -1)$ cannot travel by $(0,1)$. However, $v + (1, -1)$ is a vertex of~$H$ (because, by assumption, it is not in $R_{2,2}$), so it must  have some out-edge. We conclude that it travels by $(1,0)$, since that is the only other possibility.

(\ref{ArcForcing-10x}, \ref{ArcForcing-01x}) By symmetry, it suffices to prove~\pref{ArcForcing-10x}. For convenience, let $w = v + (0,1)$. 
Since $v$ travels by $(1,0)$, it does not travel by $(0,1)$, so $H$ does not contain the directed edge from $v$ to~$w$. Since $w$ must have an in-edge, this implies that $H$ has the directed edge from $v - (1, -1)$ to~$w$ (since that is the only other possibility). This means that $v - (1, -1)$ travels by $(1,0)$.
\end{proof}

\begin{lem}[Wu {\cite[Lem.~1]{Wu} or \cite[Lem.~2.3]{CurranEtAl}}] \label{UniqueCycles}
For $m,n \ge 2$, the digraph $(\dC_m \cartprod \dC_n) \smallsetminus R_{2,2}$ has no more than one vertex-disjoint cycle cover.
\end{lem}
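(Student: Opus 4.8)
The plan is to translate a vertex-disjoint cycle cover into a travel-direction assignment and then to show that this assignment is completely forced, so that any two covers must coincide.

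First I would reformulate the invalence condition. By \cref{TravelDefn}, a vertex-disjoint cycle cover~$H$ of $(\dC_m \cartprod \dC_n) \smallsetminus R_{2,2}$ amounts to choosing, for each surviving vertex, whether it travels by $(1,0)$ or by $(0,1)$, subject only to the requirement that every surviving vertex has invalence~$1$ (the outvalence is automatically~$1$). Writing a vertex as $w = u + (1,0)$, its two potential in-neighbours are $u = w-(1,0)$ and $u+(1,-1) = w-(0,1)$, and a short check shows that $w$ has invalence~$1$ exactly when $u$ and $u+(1,-1)$ travel in the same direction. Since $u$ and $u+(1,-1)$ are consecutive elements of a coset of the arc-forcing subgroup $\langle (1,-1)\rangle$, this is precisely the mechanism behind \cref{ArcForcing}: the travel direction is constant along each coset of $\langle (1,-1)\rangle$, except possibly across a diagonal step $u \to u+(1,-1)$ for which one of $u$, $u+(1,-1)$, $u+(1,0)$ lies in $R_{2,2}$. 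I would record the short explicit list of these \emph{break steps}, all of which are adjacent to $R_{2,2}$, together with the \emph{seed} vertices whose direction is forced outright because a deletion leaves them only one legal edge: for instance $(-1,0)$ and $(-1,1)$ must travel by $(0,1)$, and $(0,-1)$ and $(1,-1)$ must travel by $(1,0)$ (their other out-edge would enter $R_{2,2}$), while the invalence-$1$ requirement at $(2,0)$, $(2,1)$, $(0,2)$, $(1,2)$ forces $(2,-1)$ and $(2,0)$ to travel by $(0,1)$ and forces $(-1,2)$ and $(0,2)$ to travel by $(1,0)$.

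To prove that there is at most one cover, I would take two covers $H$ and $H'$ and compare their travel directions. By the first paragraph, the set of vertices on which $H$ and $H'$ disagree is a union of maximal break-free arcs of cosets of $\langle (1,-1)\rangle$, so such an arc can begin or end only at one of the break steps listed above; moreover the disagreement set avoids every seed vertex, since both covers are forced to agree there. The whole problem therefore reduces to the combinatorial statement that, on each coset, every arc lying between two consecutive break steps contains a seed — for then no arc can belong to the disagreement set, and hence $H = H'$.

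Finally I would carry out this verification, organized by $d = \gcd(m,n)$. When $d = 1$ there is a single coset carrying all of the break steps and all of the seeds, and one checks directly that the seeds meet every arc, so the cover (if any) is unique. When $d \ge 3$ I expect the cleaner phenomenon that no cover exists at all, making the statement vacuous: the coset through the deleted vertex $(0,0)$ contains its two diagonal-neighbours $(-1,1)$ and $(1,-1)$, which are seeds forced to travel in opposite directions, yet for $d \ge 3$ this coset has no break step other than at $(0,0)$ itself, so the remaining arc forces them to travel the same way — a contradiction. (This also disposes of the worry that a coset far from $R_{2,2}$, carrying neither a seed nor a break step, might be freely flippable: such cosets occur only when $d$ is large, where no cover exists anyway.) The main obstacle is the residual bookkeeping in the degenerate cases — $d = 2$, and the small values $m,n \in \{2,3\}$ — where deleted vertices, cosets, and break steps collide and the arc structure near $R_{2,2}$ must be examined by hand; this is exactly where \fullcref{ArcForcing}{10x} and \fullcref{ArcForcing}{01x} (the backward-propagation statements) are needed to close the remaining gaps.
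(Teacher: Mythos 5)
Your proposal is correct in substance, and its local machinery is the same as the paper's: propagation of travel direction along cosets of $\langle(1,-1)\rangle$, broken only at steps adjacent to $R_{2,2}$, together with ``seed'' vertices whose direction is forced outright, is exactly the content of \cref{ArcForcing}. Where you genuinely diverge is in the global step. The paper never splits cases by $\gcd(m,n)$: it first proves, by a translation argument, that whenever a cover~$H$ exists, \emph{every} coset of $\langle(1,-1)\rangle$ must meet $R_{2,2}$ --- if some coset missed $R_{2,2}$, all of its vertices would travel the same way, so the coset translated by $(1,0)$ would also miss $R_{2,2}$, and iterating exhausts the whole vertex set, contradicting $R_{2,2}\neq\emptyset$. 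That single claim puts every vertex on a maximal break-free segment whose initial vertex~$v$ satisfies ``$v-(1,-1)\in R_{2,2}$ or $v+(0,1)\in R_{2,2}$'', and the direction of each segment is then forced, uniformly in $\gcd(m,n)$ and for all $m,n\ge 2$. Your route instead splits on $d=\gcd(m,n)$: your $d\ge 3$ argument (non-existence, via the oppositely forced seeds $(1,-1)$ and $(-1,1)$ lying on a single break-free arc of the coset through $(0,0)$) is correct and even yields information the paper's lemma does not assert; and your $d=1$ check does close, because each of the five possible terminal vertices of a break-free arc --- $(-1,1)$, $(0,-1)$, $(-1,2)$, $(0,2)$, $(-1,0)$ --- is on your seed list, so every nonempty arc contains a seed regardless of the cyclic order of the breaks. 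The price is the deferred bookkeeping: the $d=2$ case does go through by the same enumeration (both cosets' arcs again terminate at seeds), but the cases $m=2$ or $n=2$ require a genuinely separate seed list, since there the vertices $(-1,0)$, $(-1,1)$, $(2,0)$, $(2,-1)$ you rely on collide with $R_{2,2}$ or with each other. Those degenerate cases are precisely what the paper's translation claim disposes of for free; if you replace your case analysis by that one claim, your argument becomes both complete and shorter, while retaining your $d\ge3$ non-existence observation as a bonus remark rather than a needed case.
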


\begin{proof}
Let $H$ be a vertex-disjoint cycle cover.

We claim that every coset of the subgroup $\langle (1, -1) \rangle$ contains at least one element of $R_{2,2}$. Suppose not, so we may let $v + \langle (1,-1) \rangle$ be a coset that does not intersect the set~$R_{2,2}$. By symmetry, we may assume, without loss of generality, that $v$ travels by $(1,0)$. Then, by repeated application of \fullcref{ArcForcing}{10}, we conclude that every element of this coset travels by $(1,0)$. Since the terminal endpoint of every directed edge of~$H$ must be a vertex of~$H$, this implies that every element of the coset $v + (1,0) + \langle (1,-1) \rangle$ is a vertex of~$H$. In other words, this coset does not intersect the set~$R_{2,2}$. By repeating this argument, we conclude, for every $k \in \ZZ^+$, that the coset $v + (k,0) + \langle (1,-1) \rangle$ does not intersect~$R_{2,2}$. However, the union of these cosets is all of $\dC_m \cartprod \dC_n$. We conclude that $R_{2,2}$ has no elements, which is a contradiction.

The claim implies that every vertex of~$H$ is contained in set of the form
	\[ I_{v,k} = \{ v, \ v + (1, -1), \ v + 2(1, -1), \ \ldots, \ v + k (1, -1) \} , \]
such that 
	\begin{enumerate} 
	\renewcommand{\theenumi}{\alph{enumi}}
	\item \label{UniqueCyclesPf-endpt}
	either $v - (1, -1) \in R_{2,2}$ or $v + (0,1) \in R_{2,2}$,
	\item either $v + (k + 1) \, (1, -1) \in R_{2,2}$ or $v + (k + 1) + (1,0) \in R_{2,2}$,
	\end{enumerate}
but
	\begin{enumerate} 
	\renewcommand{\theenumi}{\alph{enumi}}
	\setcounter{enumi}{2}
	\item \label{UniqueCyclesPf-notI}
	no element of~$I_{v,k}$ is in~$R_{2,2}$, and
	\item \label{UniqueCyclesPf-notNeigh}
	for $1 \le j < k$, neither $v + j(1, -1) + (1,0)$ nor $v + j(1, -1) + (1,0)$ is in~$R_{2,2}$.
	\end{enumerate}
From \pref{UniqueCyclesPf-notI} and~\pref{UniqueCyclesPf-notNeigh} (combined with \cref{ArcForcing}) and induction, we see that either every element of $I_{v,k}$ travels by $(1,0)$, or every element of~$I_{v,k}$ travels by $(0,1)$. 

To complete the proof, we will show that there is no choice about whether these vertices travel by $(1,0)$ or by $(1,0)$: it is uniquely determined for each~$v$. First of all, if $v + (0,1) \in R_{2,2}$, then $v$ cannot travel by~$(0,1)$, so it must travel by~$(1,0)$; hence, every vertex in $I_{v,k}$ must travel by $(0,1)$. On the other hand, if $v + (0,1) \notin R_{2,2}$, then (by~\pref{UniqueCyclesPf-endpt}) we must have $v - (1, -1) \in R_{2,2}$, so the vertex $v - (1, -1)$ is not in~$H$, and therefore cannot travel by $(1,0)$. Since $v + (0,1)$ must have an in-edge, we conclude that $v$ travels by $(0,1)$; hence, every vertex in $I_{v,k}$ must travel by $(0,1)$.
\end{proof}

\begin{lem}[cf.\ {\cite[Thm.~10]{Wu}}] \label{SpanIff}
For $m,n \ge 3$, the digraph $(\dC_m \cartprod \dC_n) \smallsetminus R_{2,2}$ has a vertex-disjoint cycle cover if and only if $\crt{-2}{0}$ and $\crt{0}{-2}$ exist, and
	\[ \min \bigl\{\, \crt{-2}{0} , \ \crt{0}{-2}  \,\bigr\} 
		< \min \bigl\{\, \crt{0}{-1} , \ \crt{-1}{0} \,\bigr\} 
		 . \]
Furthermore, if the digraph does have a vertex-disjoint cycle cover, then the number of vertices that travel by $(1,0)$ in this subdigraph is exactly twice the left-hand side of the above 
 inequality.
\end{lem}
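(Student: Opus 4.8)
The plan is to determine the travel direction of every vertex from the four directions that are forced at the corners of the hole, and then to convert the resulting combinatorics into the stated inequality and count. A vertex~$v$ is forced to travel by~$(1,0)$ as soon as $v + (0,1) \in R_{2,2}$, and is forced to travel by~$(0,1)$ as soon as $v + (1,0) \in R_{2,2}$. Since $R_{2,2} = \{0,1\} \times \{0,1\}$ and $m,n \ge 3$, this singles out four ``seeds'': the vertices $(0,-1)$ and $(1,-1)$ are forced to travel by~$(1,0)$, while $(-1,0)$ and $(-1,1)$ are forced to travel by~$(0,1)$. Feeding these into \cref{ArcForcing} propagates ``travels by~$(1,0)$'' in the direction~$(1,-1)$ and ``travels by~$(0,1)$'' in the direction~$-(1,-1)$, in each case until the propagation would enter~$R_{2,2}$. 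Hence, if a cover exists, then on every coset of $\langle (1,-1) \rangle$ it must consist of a forward run of $(1,0)$-travellers issuing from each $(1,0)$-seed and a backward run of $(0,1)$-travellers issuing from each $(0,1)$-seed, each run terminating at the hole; this determinism is exactly what \cref{UniqueCycles} already guarantees.

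To settle existence I would first bound $\gcd(m,n)$. As in the proof of \cref{UniqueCycles}, every coset of $\langle(1,-1)\rangle$ must meet~$R_{2,2}$; since the four points of $R_{2,2}$ occupy the cosets indexed by $0,1,1,2$ modulo $\gcd(m,n)$, this forces $\gcd(m,n) \le 3$. The value $\gcd(m,n) = 3$ I would exclude by a short direct argument: the two seeds $(1,-1)$ and $(-1,1)$ lie on the common coset of coordinate-sum~$0$, on which $R_{2,2}$ then has the single point~$(0,0)$, and one checks that one $(1,0)$-seed and one $(0,1)$-seed cannot both be honoured there (this failure is already visible for $\dC_3 \cartprod \dC_3$). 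Thus $\gcd(m,n) \in \{1,2\}$, so both $\crt{-2}{0}$ and $\crt{0}{-2}$ exist. It then remains to decide when the forced runs tile each coset, meaning that every vertex receives exactly one label, hence invalence and outvalence~$1$. I would track positions along a coset by the parameter~$t$ with $(t \bmod m,\, -t \bmod n) = v$, so that the position of $(i,j)$ is $\crt{i}{-j}$; the four hole points and the four seeds then acquire explicit $\crt{}{}$-coordinates. A conflict arises precisely when, moving in the direction~$(1,-1)$, a $(1,0)$-run would overrun a $(0,1)$-seed before being stopped by the hole, and converting the negation of this event into coordinates is what yields $\min\{\crt{-2}{0}, \crt{0}{-2}\} < \min\{\crt{0}{-1}, \crt{-1}{0}\}$.

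For the count I would add the lengths of the two $(1,0)$-runs. Working in the orientation $\crt{0}{-2} \le \crt{-2}{0}$, the inequality just established gives $\crt{0}{-2} < \crt{0}{-1}$; together with the elementary identity $\crt{i-1}{j-1} = \crt{i}{j} - 1$ this shows that each of the two forward runs has length exactly $\crt{0}{-2}$, so there are $2\,\crt{0}{-2} = 2 \min\{\crt{-2}{0}, \crt{0}{-2}\}$ vertices that travel by~$(1,0)$.

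The step I expect to be the main obstacle is the coordinate translation in the last two paragraphs: both the passage from ``the runs are stopped by the hole before reaching a seed'' to the displayed inequality, and the evaluation of the two run lengths, require careful handling of the two seeds, of the wrap-around along a coset, and of the symmetry that interchanges $m \leftrightarrow n$ and $(1,0) \leftrightarrow (0,1)$ simultaneously. In particular, since the number of $(1,0)$-travellers is \emph{not} symmetric under this interchange, the count must be carried out in the orientation $\crt{0}{-2} \le \crt{-2}{0}$ (in the opposite orientation the same quantity measures the $(0,1)$-travellers instead). A further case to keep separate throughout is $\gcd(m,n) = 2$, where each of the two cosets carries two hole points and one seed of each type, so the tiling and the counting must be performed coset by coset before the totals are assembled.
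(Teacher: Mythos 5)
Your outline follows the same route as the paper's proof: the four hole-adjacent ``seeds,'' propagation via \cref{ArcForcing} along cosets of $\langle(1,-1)\rangle$, parametrization of a coset so that $(i,j)$ sits at position $\crt{i}{-j}$, and a count of the two $(1,0)$-runs. Two of your side remarks are sound and worth crediting. Your coset-counting bound $\gcd(m,n)\le 3$, followed by excluding $\gcd(m,n)=3$, is a valid alternative to the paper's argument (the paper instead deduces $\gcd(m,n)\in\{1,2\}$ from $r(u)\le\crt{-2}{-2}=\lcm(m,n)-2<\crt{-1}{-1}$, which forces some term other than $\crt{-1}{-1}$ to exist in the minimum defining $r(u)$). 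And your orientation caveat for the count is not pedantry but essential: for $(m,n)=(5,3)$ the unique cover has $5$ vertices travelling by $(1,0)$, whereas $2\min\{\crt{-2}{0},\crt{0}{-2}\}=6$, so the final sentence of the lemma really is only valid in the normalization $\crt{0}{-2}<\crt{-2}{0}$, which is exactly how the paper uses it in its direct proof of \cref{WuThm2}.

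The genuine gap is that the two steps constituting the actual content of the lemma are announced rather than carried out, as you yourself concede (``the step I expect to be the main obstacle''). First, the passage from ``no $(1,0)$-run reaches a $(0,1)$-seed before the hole stops it'' to the displayed inequality is the theorem, not a formality: the paper executes it by locating, for $u=(1,-1)$, the hole at positions $\crt{-1}{-2},\crt{0}{-2},\crt{-1}{-1},\crt{0}{-1}$ ahead of~$u$ (so $r(u)$ is their minimum), locating the two $(0,1)$-seeds at positions $\crt{-2}{-1}$ and $\crt{-2}{-2}$ (so no-overrun reads $r(u)\le\min\{\crt{-2}{-1},\crt{-2}{-2}\}$), and then applying $\crt{i-1}{j-1}=\crt{i}{j}-1$ to extract $\crt{0}{-2}<\crt{-1}{0}$ after normalizing. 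None of this bookkeeping appears in your proposal; the inequality is simply asserted to be ``what the conversion yields.'' Second, the sufficiency direction is essentially untouched. You must show that when the inequality holds, a cover exists, and your ``tiling'' criterion hides two nontrivial verifications: that the two $(1,0)$-runs (from $(1,-1)$ and from $(0,-1)$) have the \emph{same} length $\crt{0}{-2}$ --- in the paper this requires the separate estimate $\crt{0}{-3}=\crt{0}{-2}+\crt{0}{-1}>\crt{0}{-2}$, valid because $\crt{0}{-2}+\crt{0}{-1}<\lcm(m,n)$ --- and that the resulting assignment has invalence~$1$ at every vertex. Your phrase ``every vertex receives exactly one label, hence invalence and outvalence~$1$'' gets outvalence for free but not invalence, which needs the closure of the $(1,0)$-runs under adding $(1,-1)$ plus a counting argument, as in the last paragraph of the paper's proof. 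Note also that the paper avoids having to prove that forced runs cover everything: it directly \emph{defines} the cover (the two runs travel by $(1,0)$, every other vertex by $(0,1)$) and verifies it, which is a cleaner finish than the tiling analysis you propose. Until these computations are done, what you have is a correct plan, not a proof.
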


\begin{proof}
($\Rightarrow$)
If a vertex~$u$ travels by $(1,0)$, and we let 
	\begin{align} \label{SpanIffPf-rDefn}
	r(u) = \min \{\, k \in \ZZ^+ \mid u + k (1, -1) \in R_{2,2} \,\}
	, \end{align}
then it follows from \fullcref{ArcForcing}{10} (and induction on~$k$) that $u + k (1,-1)$ travels by $(1,0)$ for $0 \le k < r(u)$. This implies
	\[ \text{$u + k (1,-1) + (1,0) \notin R_{2,2}$ for $0 \le k < r(u)$} . \]

In particular, we can apply this with $u = (1, -1)$, since this vertex travels by $(1,0)$ because $u + (0,1) = (1,0) \in R_{2,2}$. We have 
	\begin{align}
	R_{2,2} 
	&= \left\{ \begin{matrix} (0,1), & (1,1), \\ (0,0), & (1,0) \phantom{,} \end{matrix} \right\}
		\notag
	\\&= \left\{ \begin{matrix} (1, -1) + (-1,2), & (1, -1) + (0,2), \\ (1, -1) + (-1, 1) , & (1, -1) + (0,1)\phantom{,} \end{matrix} \right\} 
		\label{SpanIffPf-R22}
	\\&=  \left\{ \begin{matrix}  u + \crt{-1}{-2} \cdot (1, -1), &u + \crt{0}{-2} \cdot (1, -1), \\
	u + \crt{-1}{-1} \cdot (1, -1), & u + \crt{0}{-1} \cdot (1, -1) \phantom{,} \end{matrix} \right\} 
		\notag
	, \end{align}
so
	\begin{align} \label{SpanIffPf-r(u)}
	r(u) 
		=  \min\{\, \crt{-1}{-2}, \ \crt{0}{-2}, \ \crt{-1}{-1}, \ \crt{0}{-1} \} 
	. \end{align}
Also, since 
	\begin{align*}
	&u + \crt{-2}{-1} \cdot (1, -1) + (1,0) = (1, -1) + (-2,1) + (1,0) = (0, 0) \in R_{2,2}
\intertext{and}
	& u + \crt{-2}{-2} \cdot (1, -1) + (1,0) = (1,-1) + (-2, 2) + (1,0) = (0, 1) \in R_{2,2}
	, \end{align*}
we know that $u + \crt{-2}{-1} \cdot (1, -1)$ and $u + \crt{-2}{-2} \cdot (1, -1)$ do not travel by $(1,0)$, so
	\begin{align} \label{SpanIffPf-ineq}
	r(u) \le \min \bigl\{\, \crt{-2}{-1}, \ \crt{-2}{-2} \,\}
	. \end{align}
Since $\crt{-2}{-2} = \lcm(m,n) - 2$ is very large, it is almost entirely irrelevant in~\pref{SpanIffPf-ineq}, but it does imply that $\crt{-1}{-1} = \lcm(m,n) - 1$ is not the only term that exists in the right-hand side of~\pref{SpanIffPf-r(u)}. This implies that 
	$\gcd(m,n) \in \{1,2\}$, so
	$\crt{-2}{0}$ and $\crt{0}{-2}$ exist.

We may now assume that $\crt{0}{-1}$ and (equivalently) $\crt{-1}{0}$ exist, for otherwise the inequality in the statement of the \lcnamecref{SpanIff} is vacuously true.
We may also assume (by interchanging $m$ and~$n$ if necessary) that 
	\[ \min \bigl\{\, \crt{0}{-1} , \ \crt{-1}{0} \,\bigr\}  = \crt{-1}{0} . \]
Thus, we see from~\pref{SpanIffPf-12=01} that~\pref{SpanIffPf-ineq} is equivalent to the condition that $\crt{0}{-2} < \crt{-1}{0}$. This establishes the inequality in the statement of the \lcnamecref{SpanIff}.

($\Leftarrow$)
Let $u = (1,-1)$ and assume, without loss of generality, that 
	\[ \crt{-1}{0} < \crt{0}{-1} . \]
Since 
	\[ \crt{-1}{0} + \crt{0}{-1} = \crt{-1}{-1} = \lcm(m,n) - 1 , \]
this implies that $\crt{-1}{0} < \lcm(m,n)/2$, so $\crt{-2}{0} = 2 \, \crt{-1}{0} > \crt{-1}{0}$. So we see from the assumption of this direction of the proof that
	\begin{align} \label{SpanIffPf-02<10}
	\crt{0}{-2} < \crt{-1}{0} = \min \bigl\{\, \crt{-1}{0} , \  \crt{0}{-1} , \  \crt{-2}{0} \,\bigr\} 
	. \end{align}
We then conclude from~\pref{SpanIffPf-R22} (and the definition of $r(u)$ in~\pref{SpanIffPf-rDefn}) that
	\begin{align} \label{SpanIffPf-r=02}
	r(u) = \crt{0}{-2} 
	\end{align}
and (using~\pref{SpanIffPf-diff}) that
	\[ \text{$u + k(1,-1) + (1,0) \notin R_{2,2}$ \ for $0 \le k < r(u)$.} \]

Let $u' = u - (1,0) = (0, -1)$. We claim that 
	\begin{align} \label{r=r}
	 r(u') = r(u) 
	 . \end{align}
To see this, first note that
	\[ u' + r(u) \cdot (1, -1) 
	= (0, -1) +  \crt{0}{-2} \cdot (1, -1) 
	= (0,-1) + (0,2)
	= (0,1) 
	\in R_{2,2} , \]
so $r(u') \le r(u)$. On the other hand, we have
	\begin{align*}
	R_{2,2} 
	&= \left\{ \begin{matrix} (0,1), & (1,1), \\ (0,0), & (1,0) \phantom{,} \end{matrix} \right\}
	\\&= \left\{ \begin{matrix} (0,-1) + (0,2), & (0,-1) + (1,2), \\ (0,-1) + (0, 1) , & (0,-1) + (1,1)\phantom{,} \end{matrix} \right\} 
	\\&=  \left\{ \begin{matrix}  u' + \crt{0}{-2} \cdot (1, -1), &u' + \crt{1}{-2} \cdot (1, -1), \\
	u' + \crt{0}{-1} \cdot (1, -1), & u' + \crt{1}{-1} \cdot (1, -1) \phantom{,} \end{matrix} \right\} 
	, \end{align*}
so
	\begin{align*}
	r(u') 
	&= \min \bigl\{\, \crt{0}{-2}, \ \crt{1}{-2},\phantom{{}+1} \ \crt{0}{-1}, \ \crt{1}{-1}\phantom{{}+1}  \,\bigr\}
	\\[\smallskipamount]&= \min \bigl\{\, \crt{0}{-2}, \ \crt{0}{-3} + 1, \ \crt{0}{-1}, \ \crt{0}{-2} + 1 \,\bigr\}
	\end{align*}
From~\pref{SpanIffPf-02<10}, we know that the only value in this minimum that could possibly be smaller than $r(u) = \crt{0}{-2}$ is $\crt{0}{-3} + 1$. However, we have
	\[ \crt{0}{-2} + \crt{0}{-1} < \crt{-1}{0} + \crt{0}{-1} = \lcm(m,n) - 1 < \lcm(m,n) , \]
so 
	\[ \crt{0}{-3} = \crt{0}{-2} + \crt{0}{-1} > \crt{0}{-2} = r(u) . \]
This completes the proof of the claim.

Note that, for $0 \le k < r(u)$, we have
	\[ u' + k(1, -1) + (1,0) = u + k(1, -1) \notin R_{2,2} . \]
Also note that $u$ and~$u'$ are the only vertices in $(\dC_m \cartprod \dC_n) \smallsetminus R_{2,2}$ that cannot travel by $(0,1)$. Therefore, we can construct a spanning subdigraph~$H$ in which a vertex travels by $(1,0)$ if it is in the set 
	\[ \left\{ v + k(1, -1) \;\middle|\; \begin{matrix} v \in \{u,u'\}, \\ 0 \le k < r(u) \end{matrix} \right\}  \]
and travels by $(0,1)$ otherwise. By construction (and~\pref{r=r}), if a vertex~$v$ travels by $(1,0)$, and $v + (1,-1) \notin R_{2,2}$, then $v + (1,-1)$ also travels by $(1,0)$. Hence, no vertex has invalence~$2$, so the in-degree (and out-degree) of every vertex of~$H$ is~$1$, which means that $H$ is the desired vertex-disjoint cycle cover.

Furthermore, we know from the construction of~$H$ (together with~\pref{SpanIffPf-02<10} and~\pref{SpanIffPf-r=02}) that the number of vertices that travel by $(1,0)$ is as specified in the final sentence of the statement of the \lcnamecref{SpanIff}. Since $H$ is the only vertex-disjoint cycle cover (see \cref{UniqueCycles}), this completes the proof.
 \end{proof}
 
 \begin{proof}[\bf Direct proof of \cref{WuThm2}]
 \Cref{SpanIff} provides necessary conditions for the existence of a hamiltonian cycle (in particular, \fullref{main}{20} must hold), but they is not sufficient, because we need an additional condition that determines whether the cycle cover is a single cycle, rather than a union of several cycles.  This condition is provided by the ``knot class\rlap,'' which is a topological concept that was introduced into the study of Cartesian products of directed cycles by S.\,J.\,Curran \cite[\S4]{CurranWitte}.
 
 Namely, let $H$ be the vertex-disjoint cycle cover, and suppose the number of vertices of~$H$ that travel by $(1,0)$ is~$x$, and the number that travel by~$(0,1)$ is~$y$. Then $x/m$ and $y/n$ are integers, and the knot class of~$H$ is defined to be the ordered pair $(x/m, y/n)$ \cite[Rem.~4.5]{CurranWitte}. The theory \cite[Prop.~4.12(a)]{CurranWitte} tells us that 
 	\begin{align} \label{CycleIffGcd}
	\text{$H$ consists of a single cycle if and only if $\gcd(x/m, y/n) = 1$.}
	\end{align}

 We now use~\pref{CycleIffGcd} to show that \fullref{main}{mn} is a necessary condition for $H$ to be a hamiltonian cycle. The key is to notice that if $\gcd(m,n) \neq 1$, then since \cref{SpanIff} tells us that $\crt{-2}{0}$ exists, we must have $\gcd(m,n) = 2$, so 
 	\[ \text{$m$ and~$n$ are even.} \]
However, if we assume, without loss of generality, that $\crt{0}{-2} < \crt{-2}{0}$, then the last sentence of \cref{SpanIff} tells us that
	\[ x = 2 \, \crt{0}{-2} . \]
Since the number of vertices of~$H$ is $mn - 4$, this implies
	\[ y = mn - 4 - x = mn - 4 - 2 \, \crt{0}{-2} = mn - 2 \, \crt{2}{0} . \]
Since $m$ is even, it is now obvious that
	\[ \frac{x}{m} = 2 \, \frac{\crt{0}{-2}}{m}
	\quad \text{and} \quad
	\frac{y}{n} = m - 2 \, \frac{\crt{2}{0}}{n} \]
are even. Hence, $\gcd(x/m, y/n) \neq 1$, so we see from~\pref{CycleIffGcd} $H$ is not a hamiltonian cycle.

To complete the proof, we now consider the situation where \fullref{main}{mn} holds, which means that $\gcd(m,n) = 1$.
Since $x/m$ and $y/n$ are integers, we know that 
	\[ \text{$x \equiv 0 \pmod{m}$ \ and \ $y \equiv 0 \pmod{n}$.} \]
Also, since the number of vertices of~$H$ is $mn - 4$, we know that 
	\[ x + y = mn - 4 \equiv - 4 \pmod{mn} . \]
Combining these congruences tells us that 
	\[ \text{$x \equiv -4 \pmod{n}$ \ and \ $y \equiv -4 \pmod{m}$.} \]
So $x = \crt{0}{-4}$ and $y = \crt{-4}{0}$. We conclude from~\pref{CycleIffGcd} that $H$~consists of a single cycle (and is therefore a hamiltonian cycle) if and only if the condition in \fullref{main}{40} holds.
 \end{proof}

\end{document}